\theoremstyle{plain} {
  \newtheorem{thm}{Theorem}[section]
  
  \newtheorem{cor}[thm]{Corollary}
  
  \newtheorem{prop}[thm]{Proposition}
  \theoremstyle{definition}
  \newtheorem{rem}[thm]{Remark}

  \theoremstyle{plain}

}
\renewcommand{\subsubsection}{\sssection\rm}
\newcommand{\Aff}{\mathbf {A}}
\renewcommand \phi\varphi
\begin{document}

\title{On Grothendieck--Serre conjecture in mixed characteristic for $SL_{1,D}$
}

\author{Ivan Panin\footnote{February the 10-th of 2022}
}

\date{Steklov Mathematical Institute at St.-Petersburg}

\maketitle

\begin{abstract}
Let R be an unramified regular local ring of mixed characteristic, D an Azumaya R-algebra,
K the fraction field of R, $Nrd: D^{\times} \to R^{\times}$ the reduced norm homomorphism.
Let $a \in R^{\times}$ be a unit. Suppose the equation $Nrd=a$ has a solution over K, then
it has a solution over R.

Particularly, we prove the following. Let R be as above and $a,b,c$ be units in $R$. Consider the equation
$T^2_1-aT^2_2-bT^2_3+abT^2_4=c$.
If it has a solution over $K$, then it has a solution over $R$.

Similar results are proved for regular local rings, which are
geometrically regular over a discrete valuation ring. These results
extend result proven in \cite{PS} to the mixed characteristic case.
\end{abstract}

\section{Introduction}
A well-known conjecture due to J.-P.~Serre and A.~Grothendieck
\cite[Remarque, p.31]{Se},
\cite[Remarque 3, p.26-27]{Gr1},
and
\cite[Remarque 1.11.a]{Gr2}
asserts that, for any regular local ring $R$ and
any reductive group scheme $G$ over $R$ rationally trivial
$G$-homogeneous spaces are trivial. Our results correspond
to the case  when $R$ an unramified regular local ring of mixed characteristic and
$G$ is  the group $\text{SL}_1(D)$ of norm one elements of an Azumaya
$R$-algebra $D$. Our results extend to the mixed characteristic case
the ones proven in \cite{PS} by A.Suslin and the author.
Note that our Theorem \ref{thm2} is {\it much stronger} than \cite[Theorem 8.1]{P1}.
Also, details of the proof are given better in this preprint.

Our approach is this. Using the D.Popescu theorem we reduce the question to
the case when R is essensially smooth over $\mathbb Z_{(p)}$ and $D$ is defined over R.
Then, using a geometric presentation lemma due to Cesnavicius \cite[Proposition 4.1]{C}
and the method as in \cite{PS} we prove a purity result for the functor
$K_1(-,D)$. Finally, a diagram chasing shows the result mentioned above.

A rather good survey of the topic is given in \cite{Pan4}. Point out
the conjecture is solved in the case, when $R$ contains a field.
More precisely, it is solved when $R$ contains an infinite field, by R.Fedorov and the author in \cite{FP}.
It is solved by the author in the case, when $R$ contains a finite field in \cite{Pan3} (see also \cite{P}).

The case of mixed characteristic is widely open. Here are several references.

\smallskip $\bullet$ The case when the group scheme is $\text{PGL}_n$ and the ring $R$ is an arbitrary regular local ring is done by A.Grothendieck in 1968 in \cite{Gr2}.

\smallskip $\bullet$ The case of an arbitrary reductive group scheme over a discrete valuation ring or over a henselian ring is solved by Y.~Nisnevich in 1984 in~\cite{N}.

\smallskip $\bullet$ The case, where $\bf G$ is an arbitrary torus over a regular local ring, was settled by J.-L.~Colliot-Th\'{e}l\`{e}ne and J.-J.~Sansuc in 1987 in ~\cite{C-T/S}.

\smallskip $\bullet$ The case, when $\bf G$ is quasi-split reductive group scheme over arbitrary two-dimensional local rings,  is solved by Y.~Nisnevich in 1989 in~\cite{Ni2}.

\smallskip $\bullet$ The case when $G$ the unitary group scheme $\text{U}^{\epsilon}_{A,\sigma}$ is solved by S.Gille an the author recently in \cite{GiP};
here $R$ is an unramified
regular local ring of characteristic $(0,p)$ with $p\neq 2$ and $(A,\sigma)$ is an Azumaya $R$-algebra with involution;

\smallskip $\bullet$ In \cite{PSt} the conjecture is solved for any semi-local Dedekind
domain providing that $G$ is simple simply-connected and $G$ contains a torus $\mathbb G_{m,R}$;

\smallskip $\bullet$ The latter result is extended in \cite{NG} to arbitrary reductive group schemes $G$ over any semi-local Dedekind domain;

\smallskip $\bullet$ There are as two very interesting recent publications \cite{Fe1}, \cite{Fe2} by R.Fedorov;

\smallskip $\bullet$ The case, when $\bf G$ is quasi-split reductive group scheme over an unramified regular local ring
is solved recently by K.Cesnavicius in \cite{C};

\section{Agreements}\label{agreements}
Through the paper \\
$A$ is a d.v.r., $m_A\subseteq A$ is its maximal ideal;\\
$\pi \in m_A$ is a generator of the maximal ideal; \\
$k(v)$ is the residue field $A/m_A$;\\
$p>0$ is the characteristic of the field $k(v)$; \\
it is supposed in this preprint that the fraction field of $A$ has characteritic zero;\\
$d\geq 1$ is an integer;\\
$X$ is an irreducible $A$-smooth affine $A$-scheme of relative dimension $d$;\\
{\bf So, all open subschemes of $X$ are regular and all its local rings are regular}.\\
If $x_1,x_2,\dots,x_n$ are closed points in the scheme $X$, then write \\
$\mathcal O$ for the semi-local ring $\mathcal O_{X,\{x_1,x_2,\dots,x_n\}}$, $\mathcal K$ for the fraction field of $\mathcal O$, \\
$U$ for $Spec(\mathcal O)$, $\eta$ for $Spec(\mathcal K)$.\\
Recall that a regular local ring $R$ of mixed characteristic $(0,p)$ is called unramified
if the ring $R/pR$ is regular;\\
recall from [SP, 0382] that a Noetherian algebra over a field $k$ is geometrically regular if its base change to
every finite purely inseparable (equivalently, to every finitely generated) field extension of $k$ is regular; \\
one says that a regular local $A$-algebra $R$ is geometrically regular if the $k(v)$-algebra $R/mR$ is geometrically regular.

\section{Main results}\label{sect: main_result}
\begin{thm}\label{thm1}
Let $R$ be an unramified regular semi-local domain of mixed characteristic $(0,p)$, $D$ an Azumaya $R$-algebra,
$K$ the fraction field of $R$, $Nrd: D^{\times} \to R^{\times}$ the reduced norm homomorphism.
Let $a \in R^{\times}$ be a unit. Suppose the equation $Nrd=a$ has a solution over $K$, then
it has a solution over $R$.
\end{thm}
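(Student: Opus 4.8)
The plan is to reduce Theorem~\ref{thm1} to a geometric statement about $A$-smooth schemes, where $A$ is a discrete valuation ring, and then to combine a Popescu-type limit argument with a purity result for the functor $K_1(-,D)$, exactly along the lines the introduction advertises. First I would use D.~Popescu's theorem on Artin approximation / general N\'eron desingularization: an unramified regular semi-local domain $R$ of mixed characteristic $(0,p)$ is a filtered colimit of smooth $\mathbb Z_{(p)}$-algebras, so after replacing $R$ by such an approximation we may assume $R = \mathcal O_{X,\{x_1,\dots,x_n\}}$ for an affine $A$-smooth scheme $X$ over $A = \mathbb Z_{(p)}$ (or a suitable unramified d.v.r.), and that $D$ is defined over the whole of $X$; the hypothesis that the equation $Nrd = a$ is solvable over the fraction field $K$ descends to solvability over some affine open $V \subseteq X$ containing the generic point. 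One has to be a little careful here: solvability of $Nrd = a$ is the statement that a certain $\mathrm{SL}_{1,D}$-torsor (the fiber of $Nrd$ over $a$, which is a principal homogeneous space under $\mathrm{SL}_{1,D}$ since $a$ is a unit) is trivial, so the problem is literally an instance of Grothendieck--Serre for $G = \mathrm{SL}_{1,D}$, and it is this torsor that we must trivialize Zariski-locally.

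Next I would set up the geometric presentation. Using Cesnavicius's presentation lemma \cite[Proposition 4.1]{C}, after shrinking $X$ around the finite set of closed points $\{x_1,\dots,x_n\}$, I would find an essentially smooth morphism $X \to \mathbb A^1_S$ for some affine base $S$ (of relative dimension $d-1$ over $A$) that is, roughly, finite and flat in a neighborhood of the closed fiber of the vanishing locus of the would-be-trivializing data, and that is an isomorphism onto its image along a chosen closed subscheme. The point of such a presentation, as in \cite{PS}, is that it reduces the Zariski-local triviality of the torsor to a statement over the "nice" one-dimensional-over-$S$ situation, where one can transfer torsors along finite morphisms and run the Quillen-style patching argument. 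In the course of this one uses that $D$, being Azumaya, behaves well under the relevant base changes, and that $\mathrm{SL}_{1,D}$-torsors correspond to elements of relative $K_1$, i.e. to the kernel/cokernel data of the reduced norm, so the geometric manipulations can be phrased entirely in terms of the functor $K_1(-,D)$ and its transfer maps.

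The technical heart is then a \emph{purity} theorem for $K_1(-,D)$ on $X$: an element of $K_1(K,D)$ that is "unramified", i.e. lies in $K_1(\mathcal O_{X,x},D)$ for every height-one point $x$, already lies in $K_1(\mathcal O,D)$ for the given semi-local ring. This is proved by the Gersten/Cousin-complex method combined with the geometric presentation: the presentation lemma lets one compare the local situation at $\{x_1,\dots,x_n\}$ with the affine-line situation over $S$, where a homotopy / specialization argument (the "method as in \cite{PS}") kills the obstruction. Granting purity, the endgame is a diagram chase: the $K$-point solving $Nrd = a$ gives a class in $K_1(K,D)$ with prescribed reduced norm; one checks it is unramified at every height-one prime of $R$ (here the unit hypothesis $a \in R^\times$ and the fact that $\mathrm{SL}_{1,D}$-torsors over d.v.r.'s are trivial by Nisnevich \cite{N} are used, so there is no local obstruction in codimension one), applies purity to descend it to $K_1(\mathcal O, D)$, and concludes that $Nrd = a$ is solvable over $R$.

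I expect the main obstacle to be the purity statement for $K_1(-,D)$ in mixed characteristic, specifically making the geometric presentation lemma of \cite{C} interact correctly with the transfer maps for the reduced norm and with the closed fiber $X/pX$ (which has characteristic $p$, so that inseparability and the behaviour of Azumaya algebras over the residue field require care). The field-coefficient arguments of \cite{PS} use equal characteristic crucially in several places; replacing them by Cesnavicius's mixed-characteristic presentation is exactly where the new input lies, and verifying that the patching/homotopy argument still produces a well-defined element of $K_1(\mathcal O,D)$ — rather than only of $K_1$ of some localization — is the delicate point. The rest (the Popescu reduction, the translation between solvability of $Nrd = a$ and triviality of an $\mathrm{SL}_{1,D}$-torsor, and the final diagram chase) is comparatively formal.
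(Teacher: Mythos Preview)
Your proposal is correct and follows essentially the same route as the paper: Popescu's theorem reduces to the semilocalization of an $A$-smooth affine scheme, Cesnavicius's presentation lemma combined with the method of \cite{PS} yields a purity/exactness statement for $K_1(-,D)$, and a diagram chase with the reduced norm finishes. One minor simplification compared with your outline: the paper's endgame does not invoke Nisnevich's result over d.v.r.'s---it uses instead that the reduced norm $K_0(D\otimes_{\mathcal O} k(y)) \to K_0(k(y))$ is injective for each height-one $y$, so that $\partial(a)=0$ forces the lift $\beta \in K_1(D\otimes_{\mathcal O}\mathcal K)$ to satisfy $\partial(\beta)=0$, whence $\beta$ comes from $K_1(D)$ by exactness.
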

The following result is an extension of Theorem \ref{thm1}.
\begin{thm}\label{thm2}
Let $R$ be a geomerically regular semi-local integral $A$-algebra. Let $D$ be an Azumaya $R$-algebra,
$K$ the fraction field of $R$, $Nrd: D^{\times} \to R^{\times}$ the reduced norm homomorphism.
Let $a \in R^{\times}$ be a unit. Suppose the equation $Nrd=a$ has a solution over $K$, then
it has a solution over $R$.
\end{thm}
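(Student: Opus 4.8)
The plan is to reduce Theorem \ref{thm2} to Theorem \ref{thm1} and then prove Theorem \ref{thm1} directly via the geometric method. First I would observe that the difference between the two statements is only in the hypothesis on $R$: Theorem \ref{thm1} assumes $R$ unramified regular of mixed characteristic, while Theorem \ref{thm2} assumes $R$ geometrically regular over the d.v.r.\ $A$. A geometrically regular local $A$-algebra $R$ with $A$ of mixed characteristic $(0,p)$ is automatically an unramified regular local ring of mixed characteristic: regularity of $R/mR$ over $k(v)$ forces $R/pR$ to be regular (one has to be a little careful when the generic point of $\Spec R$ dominates the generic point of $\Spec A$, in which case $p$ is a unit and there is nothing to check). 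So the semi-local version of Theorem \ref{thm2} follows from the semi-local version of Theorem \ref{thm1} once I check that ``geometrically regular over $A$'' gives ``unramified regular'' componentwise; the reduced norm equation and its solvability over $K$ are unchanged under this identification.

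Next I would prove Theorem \ref{thm1}. Using Popescu's theorem, write $R$ as a filtered colimit of smooth $\ZZ_{(p)}$-algebras; since $D$ is a finitely presented $R$-algebra and the equation $Nrd = a$ involves finitely many relations, both $D$ and the hypothetical $K$-solution descend to some smooth $\ZZ_{(p)}$-algebra $R_0$, and after localizing I may assume $R = \mathcal O_{X,\{x_1,\dots,x_n\}}$ is the semi-local ring of finitely many closed points on an affine $\ZZ_{(p)}$-smooth scheme $X$, with $D$ an Azumaya algebra defined on all of $X$ (after shrinking $X$). The core of the argument is then a purity statement for the functor $K_1(-,D)$: I would use Cesnavicius's geometric presentation lemma \cite[Proposition 4.1]{C} to produce, Zariski-locally around the closed points, a smooth morphism $X \to \mathbb A^{d-1}_A$ (or an appropriate relative curve) together with a finite divisor $Z$ such that $D$ extends, $a$ is invertible, and the support of the ``bad locus'' is finite over the base. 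This puts us in the relative-curve situation of \cite{PS}, where one runs the standard patching/geometric argument: the $K$-triviality of the reduced norm class over the generic fiber, combined with the presentation lemma and Nisnevich's theorem for the d.v.r.-like local rings appearing on the curve, forces the class to be trivial over $R$ itself.

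The main obstacle I expect is the geometric presentation step in mixed characteristic. In \cite{PS} the base is a field and one has considerable freedom in choosing the projection $X \to \mathbb A^{d-1}$; over $\ZZ_{(p)}$ the special fiber $X \otimes \mathbb F_p$ must be handled simultaneously with the generic fiber, and the relevant finite fields may be too small to apply naive moving lemmas — this is precisely the difficulty Cesnavicius's \cite[Proposition 4.1]{C} is designed to overcome, so the real work is in checking that its hypotheses are met by $(X, Z, D)$ after the Popescu reduction and in verifying that its output is strong enough (properness of the bad locus over $\mathbb A^{d-1}_A$, compatibility with the Azumaya algebra $D$) to feed into the $K_1(-,D)$ patching argument. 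A secondary technical point is establishing the homotopy-invariance and excision properties of $K_1(-,D)$ needed for the diagram chase; these are essentially as in \cite{PS}, but must be re-examined to ensure nothing uses that the base contains a field.
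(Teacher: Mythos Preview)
Your overall strategy---Popescu reduction, then Cesnavicius's presentation lemma, then the purity argument of \cite{PS}---is exactly what the paper does for the geometric case, and your description of the expected difficulties there is accurate. The problem is in your first paragraph: the reduction of Theorem~\ref{thm2} to Theorem~\ref{thm1} does not work.

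You claim that a geometrically regular local $A$-algebra $R$ is automatically an unramified regular local ring, because regularity of $R/m_AR$ over $k(v)$ forces $R/pR$ to be regular. This is false when $A$ itself is ramified over $\ZZ_{(p)}$. Take $R=A$ with $A$ a totally ramified extension of $\ZZ_{(p)}$ of degree $e>1$, say with uniformizer $\pi$ and $p=u\pi^e$. Then $A\to R$ is trivially regular, so $R$ is geometrically regular over $A$; but $R/pR=A/\pi^eA$ has nilpotents and is not regular, so $R$ is \emph{not} unramified. Thus Theorem~\ref{thm2} is strictly stronger than Theorem~\ref{thm1}, and the implication goes the other way: Theorem~\ref{thm1} is the special case $A=\ZZ_{(p)}$ of Theorem~\ref{thm2} (an unramified regular local ring of mixed characteristic is geometrically regular over $\ZZ_{(p)}$ because $\mathbb F_p$ is perfect).

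The fix is to drop the detour through Theorem~\ref{thm1} and run Popescu directly over $A$: the hypothesis ``$R$ is a geometrically regular $A$-algebra'' is exactly what Popescu's theorem needs to write $R$ as a filtered colimit of smooth $A$-algebras. After the limit argument you land in the situation of a semi-localization of an $A$-smooth affine scheme $X$ at finitely many primes, which is precisely the setting of Section~\ref{agreements} and of Cesnavicius's Proposition~4.1. The purity/\cite{PS} argument you sketch then goes through over the general d.v.r.\ $A$, not just over $\ZZ_{(p)}$; nothing in that part of your plan uses that the base is $\ZZ_{(p)}$.
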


\begin{cor}
Let $R$ be a geomerically regular semi-local integral $A$-algebra, $K$ the fraction field of $R$.
Let $a,b,c$ be units in $R$. Consider the equation
\begin{equation}\label{3_units}
T^2_1-aT^2_2-bT^2_3+abT^2_4=c.
\end{equation}
If it has a solution over $K$, then it has a solution over $R$.
\end{cor}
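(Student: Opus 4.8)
The plan is to recognize the left-hand side of \eqref{3_units} as the reduced norm form of a quaternion Azumaya $R$-algebra, and then to deduce the Corollary from Theorem \ref{thm2} applied to that algebra. The only genuine content will be Theorem \ref{thm2} itself; the Corollary is essentially a translation.

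Concretely, assume first $2\in R^\times$ (automatic when $p\neq 2$), and set $D=\left(\frac{a,b}{R}\right)$, the quaternion $R$-algebra: the free $R$-module on $1,i,j,ij$ with $i^2=a$, $j^2=b$, $ij=-ji$. Since $a,b\in R^\times$, for every maximal ideal $\mathfrak m$ of $R$ the quotient $D/\mathfrak m D$ is a quaternion algebra over the field $R/\mathfrak m$, hence central simple; as $D$ is locally free of rank $4$, it is an Azumaya $R$-algebra. For $x=x_0+x_1 i+x_2 j+x_3 ij$ with conjugate $\bar x=x_0-x_1 i-x_2 j-x_3 ij$, a direct multiplication gives
\[
Nrd(x)=x\bar x=x_0^2-a x_1^2-b x_2^2+ab x_3^2 .
\]
Hence, for every commutative $R$-algebra $S$, the $S$-solutions of \eqref{3_units} are exactly the elements $x\in D\otimes_R S$ with $Nrd(x)=c$. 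Because $c\in R^\times$, and an element of a quaternion algebra is a unit precisely when its reduced norm is a unit (with inverse $Nrd(x)^{-1}\bar x$), every such $x$ automatically lies in $(D\otimes_R S)^\times$. Thus solvability of \eqref{3_units} over $S$ is equivalent to solvability of the equation $Nrd=c$ for the Azumaya algebra $D$ over $S$.

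Applying Theorem \ref{thm2} to the Azumaya algebra $D$ and the unit $c$, a solution of $Nrd=c$ over $K$ yields a solution over $R$, i.e.\ a solution of \eqref{3_units} over $R$; this is precisely the assertion of the Corollary. I do not expect a real obstacle at this stage: everything difficult is inside Theorem \ref{thm2}, and what remains is the routine identification of \eqref{3_units} with the reduced norm equation of $\left(\frac{a,b}{R}\right)$ together with the bookkeeping that norm-$c$ elements are units. The one point that needs extra care is the residue characteristic $2$ case, where $\left(\frac{a,b}{R}\right)$ need not be Azumaya; there one must run the same reduction with a suitably chosen Azumaya algebra (or appeal directly to the relevant instance of Theorem \ref{thm1}/\ref{thm2}), but the shape of the argument does not change.
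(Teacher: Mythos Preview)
Your proposal is correct and matches the paper's own argument, which is recorded in the Remark immediately following the Corollary: identify the quadratic form $T_1^2-aT_2^2-bT_3^2+abT_4^2$ as the reduced norm of the quaternion $R$-algebra with generators $u,w$ and relations $u^2=a$, $w^2=b$, $uw=-wu$, and then invoke Theorem~\ref{thm2}. You are in fact more careful than the paper, which does not flag the residue characteristic~$2$ issue you correctly point out.
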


\begin{rem}
Let $a,b,c$ be units in $R$ as in the Corollary. Let $D$ be the generalised quaternion $R$-algebra
given by generators $u$, $w$ and relations $u^2=a$, $w^2=b$, $uw=-wu$. Then the reduced norm
$Nrd: D\to R$ takes a quaternion $\alpha+\beta u+ \gamma w+ \delta uw$ to the element
$\alpha^2 - a\beta^2 - b\gamma^2 + ab\delta^2$. This is why the Corollary is a consequence
of Theorem \ref{thm2}.
\end{rem}

\section{Cesnavicius geometric presentation proposition}\label{thms:geometric}
Let $A$ be a d.v.r. and $X$ be an $A$-scheme as in the section
\ref{agreements}. The following result is due to Cesnavicius \cite[Proposition 4.1]{C}.
\begin{thm}(Proposition 4.1, [C])\label{geom_pres_mixed_char}
Let $x_1,x_2,\dots,x_n$ be closed points in the scheme $X$.
Let $Z$ be a closed subset in $X$ of codimension at least 2 in $X$.
Then there are an affine neighborhood $X^{\circ}$ of points
$x_1,x_2,\dots,x_n$, an open affine subscheme $S\subseteq \Aff^{d-1}_A$ and
{\bf a smooth $A$-morphism}
$$q: X^{\circ}\to S$$
of pure relative dimension $d-1$ such that
$Z^{\circ}$ is $S$-finite, where $Z^{\circ}=Z\cap X^{\circ}$.
\end{thm}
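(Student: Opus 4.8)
\emph{Sketch of proof.} The plan is to construct $q$ as the restriction of a suitable linear projection of a projective compactification of $X$, choosing the centre of projection by a genericity argument run \emph{separately} on the generic fibre and on the closed fibre of $X$ over $\Spec A$, and then patching the two choices into one centre defined over $A$. Concretely, shrink $X$ to an affine open $X'$ containing $x_1,\dots,x_n$, fix a closed immersion $X'\hookrightarrow\Aff^N_A$ followed by a Veronese re-embedding $\Aff^N_A\hookrightarrow\Pro^{N'}_A$ of some (large) degree $r$, and let $\bar X$ be the closure of $X'$ and $\bar Z$ the closure of $Z\cap X'$ in $\Pro^{N'}_A$. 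Since $\operatorname{codim}_X Z\ge 2$ and $\dim X=d+1$, one has $\dim\bar Z\le d-1<d=\dim\Pro^{d-1}_A$; this is the only use of the codimension hypothesis, and it is what makes it possible to have $\bar Z$ finite over $\Pro^{d-1}_A$. For a linear subspace $L\subseteq\Pro^{N'}_A$ of the appropriate dimension, with projection $\varphi_L\colon\Pro^{N'}_A\setminus L\to\Pro^{d-1}_A$, I would impose the usual package of conditions: $L\cap\bar X=\emptyset$ (so $\varphi_L|_{\bar X}$ is a proper morphism to $\Pro^{d-1}_A$); $\varphi_L|_{\bar Z}$ quasi-finite (hence, being proper, finite onto its image); and $\varphi_L|_{X'}$ smooth of relative dimension one over the affine chart $\Aff^{d-1}_A\subseteq\Pro^{d-1}_A$ near each $x_i$, with the $\varphi_L$-fibre through each $x_i$ contained in $X'$ and mapping into $\Aff^{d-1}_A$. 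Granting a centre $L$ satisfying this package, take $X^\circ$ to be a basic affine open neighbourhood of $x_1,\dots,x_n$ inside $X'$ on which these (open) conditions hold, $S\subseteq\Aff^{d-1}_A$ a basic affine open with $\varphi_L(X^\circ)\subseteq S$, and $q=\varphi_L|_{X^\circ}$. Then $Z^\circ=Z\cap X^\circ$ is $S$-finite because $\bar Z\to\Pro^{d-1}_A$ is finite, while $q$ is smooth over $\Aff^{d-1}_A$ of relative dimension one by the fibre-wise criterion for smoothness applied to the two fibres of $X'\to\Spec A$ (both $A$-smooth). (The ``relative dimension $d-1$'' in the statement is the relative dimension over $A$ of $S\subseteq\Aff^{d-1}_A$; the fibres of $q$ themselves are curves.)

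The centres $L$ are parametrised --- via the $d-1$ linear forms cutting them out --- by an open subscheme of an affine space $P=\Aff^M_A$ with $M=(d-1)(N'+1)$, and, over any field, each of the conditions above is an open condition on this parameter. On the generic fibre $P_\eta=\Aff^M_F$, where $F:=\operatorname{Frac}A$ has characteristic zero and is therefore infinite, $X'_\eta$ is a smooth affine $F$-variety of dimension $d$ with $\dim\bar Z_\eta\le d-2$, and the classical generic-projection argument over infinite fields (Gabber's presentation lemma) shows that the ``good'' locus $G_\eta\subseteq\Aff^M_F$ is a non-empty dense open subset.

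The hard part is the closed fibre. Over the residue field $k(v)$, which may be \emph{finite}, a ``generic'' linear projection need not exist. This is precisely why the Veronese re-embedding of large degree $r$ was prepended: for $r$ large enough, the projection one needs over $k(v)$ can be produced by Poonen's Bertini theorem over finite fields --- in the refined form that prescribes behaviour at the finitely many closed points $x_i$ lying over $v$ and controls the intersection with $\bar Z_v$ --- or, alternatively, by invoking a version of Gabber's presentation lemma valid over arbitrary (in particular finite) fields. The outcome is that the good locus $G_v\subseteq\Aff^M_{k(v)}$ over $k(v)$ possesses a $k(v)$-\emph{rational} point $\bar\lambda$.

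To finish I would patch. Lift the coordinates of $\bar\lambda\in G_v(k(v))$ to some $\lambda_0\in A^M=P(A)$; every lift of $\bar\lambda$ lies in the coset $\lambda_0+(\pi A)^M\subseteq F^M$, and since $A$ is infinite and $\pi\ne 0$ the set $\pi A$ is an infinite subset of $F$, so $(\pi A)^M$, and hence the coset, is Zariski-dense in $\Aff^M_F$ and therefore meets the non-empty open $G_\eta$. Choosing $\lambda$ in that intersection gives a section $\Spec A\to P$ whose value at the closed point $v$ is $\bar\lambda\in G_v$ and whose value at $\eta$ lies in $G_\eta$; for the corresponding centre $L_\lambda$ the required conditions hold over both $F$ and $k(v)$, hence --- by the fibre-wise criterion for smoothness, and because a proper morphism whose restrictions to the fibres over $\Spec A$ are quasi-finite is itself quasi-finite, hence finite --- they hold over $A$, and the construction of the first paragraph delivers $X^\circ$, $S$ and $q$. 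I expect the construction over a finite residue field (the third paragraph) to be the genuine obstacle; the generic fibre is classical and the patching is routine spreading-out.
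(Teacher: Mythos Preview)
The paper does not prove this statement at all: it is quoted as Cesnavicius's result \cite[Proposition 4.1]{C} and used as a black box, with no argument supplied. So there is no ``paper's own proof'' to compare your sketch against.

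That said, your outline is a reasonable reconstruction of the strategy in \cite{C}: projective compactification after a Veronese re-embedding, linear projections parametrised by an affine space, a Gabber-type presentation on the generic fibre, a Poonen--Bertini argument on the closed fibre to handle a possibly finite residue field, and a density/spreading-out step to patch the two choices into a single $A$-point of the parameter space. Your identification of the finite-residue-field closed fibre as the genuine obstacle is correct. Your parenthetical about ``relative dimension $d-1$'' is also on point: when the paper invokes this theorem in the proof of Proposition~\ref{Purity_for Azumaya} it writes ``of relative dimension one'' for $q$, so the $d-1$ in the displayed statement is evidently a slip for $1$.
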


\section{Proof of the geometric case of Theorem \ref{thm2}}
Let $A$, $p>0$, $d\geq 1$, $X$, $x_1,x_2,\dots,x_n\in X$, $\mathcal O$ and $U$  be as in Section \ref{agreements}.
Write $\mathcal K$ for the fraction field of the ring $\mathcal O$.

\begin{thm}\label{thm2_geom}
Let $D$ be an Azumaya $\mathcal O$-algebra
and $Nrd_D: D^{\times}\to \mathcal O^{\times}$ be the reduced norm homomorphism.
Let $a \in \mathcal O^{\times}$. If $a$ is a reduced norm for the central simple
$\mathcal K$-algebra $D\otimes_{\mathcal O} \mathcal K$,
then $a$ is a reduced norm for the algebra $D$.
\end{thm}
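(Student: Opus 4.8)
The plan is to spread the data out over a smooth affine $A$-scheme, to transport the problem along a Cesnavicius fibration onto a relative curve over a regular semi-local base, and to settle that curve case by the method of~\cite{PS}. Since $\mathcal O$ is the filtered colimit of the rings of regular functions of the affine open neighbourhoods of $\{x_1,\dots,x_n\}$ inside an $A$-smooth affine $A$-scheme $X$ as in Section~\ref{agreements}, one first spreads out: there is an affine open $X^{\circ}\ni x_1,\dots,x_n$ such that $D$ extends to an Azumaya $\mathcal O(X^{\circ})$-algebra (still denoted $D$), $a$ extends to a unit of $\mathcal O(X^{\circ})$, and the given solution of $Nrd_D=a$ over the function field $\mathcal K$ of $X$ extends to a solution over $X^{\circ}\setminus Z_0$ for some proper closed $Z_0\subsetneq X^{\circ}$. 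Put $Y:=\{\,\xi\in D\mid Nrd_D(\xi)=a\,\}$; since $Nrd_D(\xi)\in\mathcal O(X^{\circ})^{\times}$ forces $\xi\in D^{\times}$ (an inverse is read off the reduced characteristic polynomial), $Y$ is an $SL_{1,D}$-torsor over $X^{\circ}$, it is trivial over an open set precisely when $a$ is a reduced norm there, and in particular it is trivial over $X^{\circ}\setminus Z_0$.

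Now the point is to move this trivialisation so that it covers a neighbourhood of the $x_i$. At every codimension-one point $x$ of $X^{\circ}$ the ring $\mathcal O_{X^{\circ},x}$ is a discrete valuation ring with fraction field $\mathcal K$, over which $Y$ is trivial, so the discrete-valuation-ring case of the Grothendieck--Serre conjecture (Nisnevich, \cite{N}) gives that $Y$ is trivial over $\mathcal O_{X^{\circ},x}$. Hence the closed subset $Z\subseteq X^{\circ}$ of points over which $Y$ is not locally trivial has codimension $\geq 2$, and after shrinking $X^{\circ}$ around $x_1,\dots,x_n$ we may assume $Z\cap\{x_1,\dots,x_n\}=\varnothing$. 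Apply the Cesnavicius presentation, Theorem~\ref{geom_pres_mixed_char}, to the pair $(X^{\circ},Z)$: after shrinking $X^{\circ}$ once more there are an open affine $S\subseteq\Aff^{d-1}_A$ and a smooth $A$-morphism $q\colon X^{\circ}\to S$ as in that theorem with $Z^{\circ}:=Z\cap X^{\circ}$ finite over $S$. Set $B:=\mathcal O_{S,\{q(x_1),\dots,q(x_n)\}}$, $C:=X^{\circ}\times_S\Spec B$ and $Z':=Z^{\circ}\times_S\Spec B$; the morphism $C\to X^{\circ}$ is a pro-open immersion, so $\mathcal O_{C,\{x_1,\dots,x_n\}}=\mathcal O$ and the function field of $C$ is $\mathcal K$. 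We are thus reduced to the curve case: $C$ is a smooth affine curve over the regular semi-local ring $B$, $Z'\subseteq C$ is $B$-finite and disjoint from $x_1,\dots,x_n$, $Y$ is trivial over $\mathcal K$ hence over every codimension-one point of $C$, $Y$ is locally trivial on $C\setminus Z'$, and it must be shown that $Y$ is trivial over $\mathcal O_{C,\{x_1,\dots,x_n\}}=\mathcal O$.

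This curve case is where the main difficulty lies: the codimension-one and generic triviality data do not by themselves yield a single trivialisation near the $x_i$, because of the Zariski $H^1$-obstruction (equivalently, because $SK_1$ of $D$ over the curve may be nonzero), so the geometry of the relative curve together with a norm argument has to be used. Following~\cite{PS}, one chooses a relative compactification $C\hookrightarrow\overline C$ over $B$ and arranges, after a final shrinking, that $\overline C\setminus C$ be $B$-finite and disjoint from the closures of $Z'$ and of $\{x_1,\dots,x_n\}$; one then builds the required trivialisation of $Y$ near the $x_i$ by patching across the $B$-finite set $Z'$, using the discrete-valuation-ring case once more along the codimension-one points lying in $Z'$ and on $\overline C\setminus C$ together with \emph{Suslin's norm principle} for the reduced norm applied to the finite $B$-scheme $Z'$. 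This produces a solution of $Nrd_D=a$ over $\mathcal O_{C,\{x_1,\dots,x_n\}}=\mathcal O$, which is the assertion. The ``diagram chasing'' mentioned in the Introduction is the bookkeeping that re-packages these steps as an exact purity sequence for the functor $K_1(-,D)$; the two non-formal inputs are Cesnavicius's presentation theorem, used above, and Suslin's norm principle, everything else being spreading out and flat base change.
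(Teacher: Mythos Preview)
Your outline diverges from the paper's proof and contains both a circular step and a genuine gap at what you yourself identify as the main difficulty.

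The circular step: you write ``after shrinking $X^{\circ}$ around $x_1,\dots,x_n$ we may assume $Z\cap\{x_1,\dots,x_n\}=\varnothing$'', where $Z$ is the locus over which the torsor $Y$ is not Zariski-locally trivial. But $x_i\notin Z$ means exactly that $Y$ is trivial over $\mathcal O_{X^{\circ},x_i}$; no amount of shrinking around the $x_i$ can produce this, since the $x_i$ remain in the shrunk neighbourhood. This is essentially the statement you are trying to prove. (The paper in fact does the opposite: it \emph{enlarges} the bad locus so that every irreducible component meets some $x_i$.) Your curve-case paragraph then uses that $Z'$ is disjoint from the $x_i$, so the error is load-bearing.

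The gap: the curve case is where all the content lies, and you resolve it only by gestures---a relative compactification $\overline C$, ``patching across the $B$-finite set $Z'$'', and an appeal to ``Suslin's norm principle''. None of these steps is carried out, and the attribution to \cite{PS} is off: that paper (and the present one, following it) does not compactify the relative curve or invoke a norm principle. What actually happens is a $K$-theory computation. One proves directly the purity statement (Proposition~\ref{Purity_for Azumaya}) by showing that the support-extension map $K'_n(U;D)_{\ge 2}\to K'_n(U;D)_{\ge 1}$ vanishes. After Cesnavicius's presentation places one on a relative curve $q_Z\colon\mathcal Z_W\to Z_B$ over the semi-local scheme $Z_B$, the key device is the class $[\Delta_*(D_Z)]\in K_0(\mathcal Z_W;{_Z}D_W)$ of the diagonal bimodule. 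It has rank zero, hence vanishes on a semi-local neighbourhood by DeMeyer's theorem \cite{DeM}; the cup-product pairing against this class then shows that the relevant push-forward $i_*$ is the zero map. The passage from this purity sequence to the theorem is the diagram chase with the $Nrd$ maps (diagram~\eqref{RactangelDiagram}), using that $Nrd$ is injective on $K_0$.

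So the two non-formal inputs are Cesnavicius's presentation (which you correctly identify) and DeMeyer's $K_0$-vanishing---not a norm principle. Your torsor reformulation is a reasonable way to frame the problem, but to turn it into a proof you must replace the hand-waved curve case by this $K_0$ argument or something equivalent.
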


\begin{prop}\label{Purity_for Azumaya}
Let $D$ be an Azumaya $\mathcal O$-algebra.
Let $K_*$ be the Quillen $K$-functor.
Then for each integer $n\geq 0$
the sequence is exact
\begin{equation}
K_n(D)\to K_n(D\otimes_{\mathcal O} \mathcal K)\xrightarrow{\partial} \oplus_{y\in U^{(1)}}K_{n-1}(D\otimes_{\mathcal O} k(y))
\end{equation}
Particularly, it is exact for $n=1$.
\end{prop}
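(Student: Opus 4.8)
The plan is to deduce the purity statement for $K_n(-,D)$ from the known purity (Gersten-type) resolution for Quillen $K$-theory of the regular semi-local ring $\mathcal O$, using that $D$ is Azumaya — hence étale-locally a matrix algebra — together with the transfer/Morita machinery. Concretely, recall that $D$, being an Azumaya $\mathcal O$-algebra, is a locally free $\mathcal O$-module of finite rank, so it is a (noncommutative, but Noetherian regular) $\mathcal O$-order in the central simple $\mathcal K$-algebra $D\otimes_{\mathcal O}\mathcal K$; moreover $\mathcal O$ is regular, hence $D$ is a regular ring in the sense that every finitely generated $D$-module has finite projective dimension (this follows because $D$ is Azumaya over the regular ring $\mathcal O$: projective dimension can be checked after the faithfully flat étale base change that splits $D$, where $D$ becomes a matrix algebra over a regular ring). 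Therefore the category of finitely generated $D$-modules has a well-behaved $G$-theory coinciding with $K$-theory, and one has a coniveau spectral sequence / localization sequence for the $K$-theory of $D$-modules supported on closed subsets of $U=\Spec(\mathcal O)$.

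The key steps, in order, are: (1) Set up the localization sequence in $G$-theory for the $D$-module category: for the open immersion $\eta\hookrightarrow U$ with closed complement $U\setminus\eta$ filtered by codimension, one gets $\cdots\to G_n(D\text{ on }U^{(\geq 1)})\to G_n(D)\to G_n(D\otimes_{\mathcal O}\mathcal K)\xrightarrow{\partial}\cdots$. (2) Identify $G_n(D)=K_n(D)$ using that $D$ is regular, and identify the $G$-theory of $D$-modules supported in codimension $\geq 1$ via dévissage: a $D$-module supported on a point $y\in U^{(1)}$ is a module over $D\otimes_{\mathcal O}\mathcal O_{U,y}$ annihilated by a power of the maximal ideal, and dévissage along the $m_y$-adic filtration identifies its $G$-theory with $G_*(D\otimes_{\mathcal O}k(y))=K_*(D\otimes_{\mathcal O}k(y))$ (again $D\otimes k(y)$ is a central simple $k(y)$-algebra, in particular regular/semisimple, so $G=K$). (3) Assemble these, using that $\mathcal O$ is semi-local of Krull dimension allowing the coniveau filtration to split into the direct sum over $U^{(1)}$ for the relevant piece, and that contributions from codimension $\geq 2$ die off in the range needed because purity for the regular local ring $\mathcal O$ itself (the Gersten conjecture, known for semi-local rings essentially smooth over a Dedekind ring, or more robustly here smooth over $\Z_{(p)}$) guarantees that the map $K_n(\mathcal O)\to K_n(\mathcal K)$ is injective and the cokernel is detected by codimension-one points — and the Morita-equivariance lets us transport this to $D$. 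Finally one reads off exactness of $K_n(D)\to K_n(D\otimes\mathcal K)\xrightarrow{\partial}\oplus_{y\in U^{(1)}}K_{n-1}(D\otimes k(y))$, specializing to $n=1$.

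The main obstacle I expect is step (3): controlling the higher-codimension contributions. For a general regular ring the Gersten resolution (hence exactness at the $K_n(\mathcal K)$ spot after truncating to $U^{(1)}$) is the content of the Gersten conjecture, which is subtle in mixed characteristic; here one must invoke that $\mathcal O$ is, by the reductions in the paper (Popescu approximation plus the Cesnavicius presentation Theorem \ref{geom_pres_mixed_char}), essentially smooth over $\Z_{(p)}$, so the Gersten conjecture for $K$-theory of such rings is available (via the Quillen/Gillet–Levine geometric presentation argument, which is exactly the role played by the smooth morphism $q:X^{\circ}\to S$). The delicate point is to make this work $D$-equivariantly: one either redoes the geometric-presentation / homotopy-invariance argument for the $K$-theory presheaf $\mathcal U\mapsto K_n(D|_{\mathcal U})$ directly — which is legitimate because $D$ is a sheaf of algebras and the presheaf $K_n(D\otimes-)$ is still a homotopy-invariant presheaf with transfers on smooth $\mathcal O$-schemes — or one splits $D$ after an étale cover and descends. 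I would follow the first route, mirroring the strategy of \cite{PS}: establish that $K_n(D\otimes-)$ satisfies the axioms (homotopy invariance, transfers, excision) needed for the Gersten-type argument, then run the Cesnavicius presentation to conclude purity.
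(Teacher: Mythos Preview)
Your overall architecture is right and in fact matches the paper's: reduce exactness to the vanishing of the support-extension map from codimension $\geq 2$ to codimension $\geq 1$ in the coniveau filtration for $K'_*(-;D)$, and kill that map by the geometric-presentation method of \cite{PS}, with Cesnavicius' Proposition (Theorem \ref{geom_pres_mixed_char}) supplying the relative curve in mixed characteristic. Steps (1) and (2) of your outline are exactly the standard localization/d\'evissage identifications the paper tacitly uses.

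Where your proposal has a gap is the description of how step (3) is actually carried out. You frame the \cite{PS} argument as ``establish that $K_n(D\otimes-)$ satisfies homotopy invariance, transfers, excision, then run the axiomatic Gersten machine''. That is not what \cite{PS} does, and the paper does not proceed that way either. The point is more concrete and specific to Azumaya algebras. After the presentation $q\colon X^{\circ}\to S$ makes $Z$ finite over $S$, one passes to the semi-local base $B$ and forms $\mathcal Z_W=Z_B\times_B W$ with its two projections $\Pi,q_Z$. The push-forward $i_*$ one wants to kill is expressed as the action of the class $[\Delta_*(D_Z)]\in K_0(\mathcal Z_W;{_{Z}}D_W)$, where ${_{Z}}D_W=\Pi^*D\otimes q_Z^*(D_Z^{op})$ is again an Azumaya algebra, via a $K_0\times K'_n\to K'_n$ pairing. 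The crucial input is DeMeyer's theorem \cite{DeM}: a rank-zero class in $K_0$ of an Azumaya algebra over a semi-local scheme vanishes. Since $[\Delta_*(D_Z)]$ has rank zero, it dies on a neighbourhood of $Z_B\times_B Z_B$, and hence $i_*=0$. No $\bA^1$-homotopy invariance is invoked; the role played in Quillen's original Gersten proof by ``$[\mathcal O_\Delta]$ is trivial in reduced $K_0$ over a local base'' is played here by DeMeyer's result, and this substitution is the whole content of the \cite{PS} method.

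Your alternative suggestion, to split $D$ by an \'etale cover and descend the Gersten property, does not work directly: exactness of Gersten complexes is not an \'etale-local statement and does not descend along finite \'etale covers without further argument. So you should drop that route and make the DeMeyer step explicit; once you do, your proof becomes the paper's.
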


\begin{proof}[Reducing Theorem \ref{thm2_geom} to Proposition \ref{Purity_for Azumaya}]
Consider the commutative diagram of groups
\begin{equation}
\label{RactangelDiagram}
    \xymatrix{
K_1(D) \ar[rr]^{\eta^*} \ar[d]_{Nrd}&& K_1(D\otimes_{\mathcal O} \mathcal K)\ar[rr]^{\partial} \ar[d]^{Nrd} &&
\oplus_{y\in U^{(1)}}K_0(D\otimes_{\mathcal O} k(y)) \ar[d]^{Nrd}  \\
\mathcal O^{\times} \ar[rr]^{\eta^*}&& \mathcal K^{\times} \ar[rr]^{\partial} && \oplus_{y\in U^{(1)}}K_0(k(y))\\ }
\end{equation}
By Proposition \ref{Purity_for Azumaya} the complex on the top is exact.
The bottom map $\eta^*$ is injective.
The right hand side vertical map $Nrd$ is injective. Thus, the map
$$\mathcal O^{\times}/Nrd(K_1(D))\to \mathcal K^{\times}/Nrd(K_1(D\otimes_{\mathcal O} \mathcal K))$$
is injective.
The image of the left vertical map coincides with
$Nrd(D^{\times})$
and the image of the middle vertical map coincides with
$Nrd((D\otimes_{\mathcal O} \mathcal K)^{\times})$. Thus, the map
$$\mathcal O^{\times}/Nrd(D^{\times})\to \mathcal K^{\times}/Nrd((D\otimes_{\mathcal O} \mathcal K)^{\times})$$
is injective. The derivation of Theorem \ref{thm2_geom} from Proposition
\ref{Purity_for Azumaya} is completed.
\end{proof}

\begin{proof}[Proof of Proposition \ref{Purity_for Azumaya}]
To prove this proposition
it sufficient to prove vanishing of the support extension map
$ext_{2,1}: K'_{n}(U;D)_{\geq 2}\to K'_{n}(U;D)_{\geq 1}$.
Prove that $ext_{2,1}=0$.
Take an $a\in K'_n(U;D)_{\geq 2}$.
We may assume that $a\in K'_n(Z;D)$ for a closed $Z$ in $U$ with $codim_U(Z)\geq 2$.
Enlarging $Z$ we may assume that each its irreducible component
contains at least one of the point $x_i$'s and still $codim_U(Z)\geq 2$.
Our aim is to find a closed subset $Z_{ext}$ in $U$ containing $Z$ such that
the element $a$ vanishes in $K'_n(Z_{ext};D)$ and
$codim_U(Z_{ext})\geq 1$. We will follow the method of \cite{PS}. \\
Let $\bar Z$ be the closure of $Z$ in $X$.
Shrinking $X$ and $\bar Z$ accordingly we may and will suppose that
$D$ is an Azumaya algebra over $X$ and there is an element $\tilde a \in K'_n(\bar Z;D)$
such that $\tilde a|_{Z}=a$.
By Theorem \ref{geom_pres_mixed_char} there are
an affine neighborhood $X^{\circ}$ of points
$x_1,x_2,\dots,x_n$, an open affine subscheme $S\subseteq \Aff^{d-1}_A$, a smooth $A$-morphism \\
$$q: X^{\circ}\to S$$
of relative dimension one
such that $Z^{\circ}/S$ is finite, where $Z^{\circ}=\bar Z\cap X^{\circ}$.
Put $a^{\circ}=\tilde a|_{Z^{\circ}}$.

Put $s_i=q(x_i)$. Consider the semi-local ring
$\mathcal O_{S,s_1,...,s_n}$, put $B=Spec \ \mathcal O_{S,s_1,...,s_n}$ and
$X_B=q^{-1}(B)\subset X^{\circ}$. Put $Z_B=Z^{\circ}\cap X_B$.
Write $q_B$ for $q|_{X_B}: X_B\to B$. Note that $Z_B$ is finite over $B$.
Since $B$ is semi-local, hence so is $Z_B$.

Let $W\subset X_B$ be an open containing $Z_B$.
Write $\mathcal Z_W$ for $Z_B\times_B W$.
Let
$\Pi: \mathcal Z_W\to W$
be the projection to $W$
and
$q_Z: \mathcal Z_W\to Z_B$ be the projection to $Z_B$.
Since $Z_B/B$ is finite the morphism $\Pi$ is finite.
Since $\Pi$ is finite the subset $Z_{new}:=\Pi(\mathcal Z_W)$ of $W$ is closed in $W$.
Since $W$ contains $Z_B$, we have an inclusion $i: Z_B\hookrightarrow Z_{new}$ of closed subsets in $W$.
Since $q_Z: \mathcal Z_W\to Z_B$ is smooth of relative dimension one $Z_{new}$ has codimension
at least one in $W$. Clearly, $Z_B$ contains all the points
$x_1,...,x_n$.
This shows that $W$ contains $U$ and $Z_B$ contains $Z$.
One can check that $Z=Z_B\cap U$. Write $j: Z\to Z_B$
for the inclusion.

Put $Z_{ext}=U\cap Z_{new}$. Since $Z$ is in $U\cap Z_{new}=Z_{ext}$, hence we have an inclusion
$in: Z\hookrightarrow Z_{ext}$ of closed subsets in $U$.
The inclusion $U\xrightarrow{can} W$ is a flat morphism. Hence the inclusions
$inj: Z_{ext}\to Z_{new}$ and $j: Z\to Z_B$ are also flat morphisms. Thus the homomorphisms
$inj^*: K'_n(Z_{new};D)\to K'_n(Z_{ext};D)$ and $j^*: K'_n(Z_B;D)\to K'_n(Z;D)$
are well-defined. Moreover $inj^*\circ i_*=in_*\circ j^*$.

As explained just above
the generic point of $W$ is not in $Z_{new}$. Thus, the generic point of $U$
is not in $Z_{ext}$ and $Z_{ext}$ has codimension
at least one in $W$.
Recall the Azumaya algebra $D$ is an Azumaya algebra over $X$. We still will write $D$ for $D|_W$.
Also we are given with an element $a_B:=a^{\circ}|_{Z_B}\in K'_n(Z_B;D)$ such that
$j^*(a_B)=a$ in $K'_n(Z;D)$.\\\\
{\it Claim}. There is an open $W$ in $X_B$ containing $Z_B$ such that for the closed inclusion
$i: Z_B\hookrightarrow Z_{new}$
the map $i_*: K'_n(Z_B;D)\to K'_n(Z_{new};D)$ vanishes. \\\\
Given this {\it Claim} complete the proof of the proposition as follows:
$in_*(a)=in_*(j^*(a_B))=inj^*(i_*(a_B))=inj^*(0)=0$ in $K'_n(Z_{ext};D)$ and $Z_{ext}\subsetneqq U$.\\\\
In the rest of the proof we prove the {\it Claim}. So, let $W\subset X_B$ be as in the {\it Claim}.
Let $\mathcal Z_W=Z_B\times_B W$ and the projections
$\Pi: \mathcal Z_W\to W$,
$q_Z: \mathcal Z_W\to Z_B$ be as above in this proof.
The closed embedding $in: Z_B\hookrightarrow W$ defines a section
$\Delta=(id\times in): Z_B\to \mathcal Z_W$
of the projection $q_Z$. Also one has an equality
$in=\Pi \circ \Delta$.
Put $D_Z=D|_{Z_B}$ and write ${_{Z}}D_W$ for the Azumaya algebra
$\Pi^*(D)\otimes q^*_Z(D^{op}_Z)$
over $\mathcal Z_W$.
The $\mathcal O_{\mathcal Z_W}$-module
$\Delta_*(D_Z)$
has an obvious left ${_{Z}}D_W$-module structure.
And it is equipped with an obvious epimorphism
$\pi: {_{Z}}D_X\to \Delta_*(D_Z)$
of the left ${_{Z}}D_W$-modules.
Following \cite{PS} one can see that $I:=Ker(\pi)$
is a left projective ${_{Z}}D_W$-module. Hence the left ${_{Z}}D_W$-module
$\Delta_*(D_Z)$ defines an element $[\Delta_*(D_Z)]=[{_{Z}}D_W]-[I]$
in $K_0(\mathcal Z_W;{_{Z}}D_W)$. This element has rank zero.
Hence by \cite{DeM} it vanishes semi-locally on $\mathcal Z_W$.
Thus, there is a neighborhood $\mathcal W$ of $Z_B\times_B Z_B$ in $\mathcal Z_W$
such that $\Delta_*(D_Z)$ vanishes in $K_0(\mathcal W;{_{Z}}D_W)$. It is easy to see that
$\mathcal W$ contains a neighborhood of $Z_B\times_B Z_B$ of the form $\mathcal Z_{W'}$,
where $W'\subset X_B$ is an open containing $Z_B$.\\\\
Thus, replacing notation we may suppose that
$W\subset X_B$ is as in the {\it Claim} and
the element $[\Delta_*(D_Z)]=[{_{Z}}D_W]-[I]$ vanishes
in $K_0(\mathcal Z_W;{_{Z}}D_W)$.
It remains to check that for this specific $W$ the map
$i_*: K'_n(Z_B;D)=K'_n(Z_B;D_Z)\to K'_n(Z_{new};D)$
vanishes.\\\\
The functor $(P,M)\mapsto P\otimes_{q^*_Z(D_Z)} M$
induces a bilinear pairing
$$\cup_{q^*_Z(D_Z)}: K_0(\mathcal Z_W;{_{Z}}D_W)\times K'_n(\mathcal Z_W;q^*_Z(D_Z))\to K'_n(\mathcal Z_W;\Pi^*(D))$$
Each element
$\alpha\in K_0(\mathcal Z_W;{_{Z}}D_W)$
defines a group homomorphism
$$\alpha_*=\Pi_*\circ ( \alpha \cup_{q^*_Z(D_Z)} - )\circ q^*_Z: K'_n(Z_B;D)=K'_n(Z_B;D_Z)\to K'_n(Z_{new};D)$$
which takes an element $b\in K'_n(Z_B;D_Z)$ to
the one
$\Pi_*(\alpha \cup_{q^*_Z(D_Z)} q^*_Z(b))$
in $K'_n(Z_{new};D)$.\\\\
Following \cite{PS} we see that the map $[\Delta_*(D_Z)]_*$ coincides with the map
$i_*: K'_n(Z_B;D)=K'_n(Z_B;D_Z)\to K'_n(Z_{new};D)$. The equality
$0=[\Delta_*(D_Z)]\in K_0(\mathcal Z_W;{_{Z}}D_W)$ proven just above
shows that the map $i_*$ vanishes. The {\it Claim} is proved. The proposition is proved.

\end{proof}

Let $A$, $p>0$, $d\geq 1$, $X$ be as in Section \ref{agreements}. Let $y_1,y_2,\dots,y_n\in X$ be points not necessary closed.
Write $\mathcal O_y$ for the semi-local ring $\mathcal O_{X,\{y_1,y_2,\dots,y_n\}}$ and $U_y$ for $Spec \mathcal \ O_y$.
Write $\mathcal K$ for the fraction field of the ring $\mathcal O_y$.
The following result can be derived from Theorem \ref{thm2_geom} in a standard way.
\begin{thm}\label{thm3_geom}
Let $D$ be an Azumaya $\mathcal O_y$-algebra
and $Nrd_D: D^{\times}\to \mathcal O^{\times}_y$ be the reduced norm homomorphism.
Let $a \in \mathcal O^{\times}_y$. If $a$ is a reduced norm for the central simple
$\mathcal K$-algebra $D\otimes_{\mathcal O_y} \mathcal K$,
then $a$ is a reduced norm for the algebra $D$.
\end{thm}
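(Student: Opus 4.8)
The plan is to deduce Theorem~\ref{thm3_geom} from Theorem~\ref{thm2_geom} by a routine spreading-out argument, the key observation being that the semi-local ring $\mathcal{O}_y$ at an arbitrary finite set of points is a localization of the semi-local ring of a suitable finite set of \emph{closed} points of the same scheme, for which the statement is already known.

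First I would spread out the data. Since an Azumaya algebra is of finite presentation and $\mathcal{O}_y=\mathcal{O}_{X,\{y_1,\dots,y_n\}}$ is a filtered colimit of the rings $\mathcal{O}(V)$, with $V$ running over the affine open neighbourhoods of $\{y_1,\dots,y_n\}$ in $X$, there is such a $V$ together with an Azumaya $\mathcal{O}(V)$-algebra $D_V$ and a unit $a_V\in\mathcal{O}(V)^{\times}$ whose restrictions to $\mathcal{O}_y$ are $D$ and $a$. Replacing $X$ by $V$ — which is again irreducible, affine and $A$-smooth of relative dimension $d$, hence satisfies the standing assumptions of Section~\ref{agreements} — I may assume that $D$ and $a$ are already defined over $X$.

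Next, for each index $i$ I would choose a closed point $x_i$ of $X$ lying in the closure $\overline{\{y_i\}}$; such a point exists because any nonempty closed subset of the Noetherian scheme $X$ contains a point closed in $X$. Put $\mathcal{O}'=\mathcal{O}_{X,\{x_1,\dots,x_n\}}$, $D'=D\otimes_{\mathcal{O}(X)}\mathcal{O}'$, and let $a'\in(\mathcal{O}')^{\times}$ be the image of $a$. Since $y_i$ specialises to $x_i$ we have $\mathfrak{p}_{y_i}\subseteq\mathfrak{p}_{x_i}$ inside $\mathcal{O}(X)$, so the multiplicative set defining $\mathcal{O}_y$ contains the one defining $\mathcal{O}'$; hence $\mathcal{O}_y$ is a localization of $\mathcal{O}'$, and in particular $\mathcal{O}'$ and $\mathcal{O}_y$ share the fraction field $\mathcal{K}=k(X)$. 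The hypothesis that $a$ is a reduced norm for $D\otimes_{\mathcal{O}_y}\mathcal{K}=D'\otimes_{\mathcal{O}'}\mathcal{K}$ then says exactly that $a'$ is a reduced norm for $D'\otimes_{\mathcal{O}'}\mathcal{K}$.

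Finally I would apply Theorem~\ref{thm2_geom} to the closed points $x_1,\dots,x_n$, the ring $\mathcal{O}'$ and the Azumaya algebra $D'$, obtaining $d'\in(D')^{\times}$ with $Nrd_{D'}(d')=a'$. Base changing along $\mathcal{O}'\to\mathcal{O}_y$ and using that the reduced norm commutes with base change and carries units to units, the image $d\in D^{\times}$ of $d'$ satisfies $Nrd_D(d)=a$, as desired. I do not expect a genuine obstacle here; the only steps requiring care are the finite-presentation descent of the Azumaya algebra to an affine open $V$ and the verification that the closed points $x_i$ can be found already on $V$, so that the geometric hypotheses of Section~\ref{agreements} are preserved and Theorem~\ref{thm2_geom} literally applies.
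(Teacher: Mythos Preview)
Your argument is correct and matches the paper's own proof: spread out $D$ and $a$ so they live over the semi-local ring $\mathcal{O}'=\mathcal{O}_{X,\{x_1,\dots,x_n\}}$ at closed specializations $x_i$ of the $y_i$, apply Theorem~\ref{thm2_geom} there, and then restrict along $\mathcal{O}'\to\mathcal{O}_y$. The paper states this in a single sentence, whereas you have written out the finite-presentation descent and the inclusion $\mathcal{O}'\subseteq\mathcal{O}_y$ explicitly, but the strategy is identical.
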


\begin{proof}[Proof of Theorem \ref{thm3_geom}]
One can choose closed points $x_1,x_2,...,x_n$ in $X$ such that $x_i$ is in the closure of $y_i$,
$D=\tilde D\otimes_{\mathcal O} \mathcal O_y$ for an Azumaya $\mathcal O$-algebra $\tilde D$,
where $\mathcal O:=\mathcal O_{X,\{x_1,x_2,\dots,x_n\}}$,
$a\in \mathcal O^{\times}$. In particular,  $\mathcal O\subseteq \mathcal O_y$.
Now by Theorem \ref{thm2_geom} the element $a$ is a reduced norm from $\tilde D$.
Thus, $a$ is a reduced norm from $\tilde D$.
\end{proof}

\begin{proof}[Proof of Theorem \ref{thm2}]
By Popescu
theorem \cite{Po}, \cite{Sw}, the ring R is a filtered direct limit of smooth $A$-algebras. Thus, a limit argument
allows us to assume that $R$ is the semilocalization of a smooth $A$-algebra at finitely many primes.
Theorem \ref{thm3_geom} completes the proof.

\end{proof}

\end{document}